\newtheorem{thm}{Theorem}
\newtheorem{lem}{Lemma}
\theoremstyle{remark}
\newtheorem{rem}{Remark}
\DeclareMathOperator{\td}{d\mspace{-2mu}}
\begin{document}

\title[Some bounds for the complete elliptic integrals]
{Some bounds for the complete elliptic integrals of the first and second kinds}

\author[B.-N. Guo]{Bai-Ni Guo}
\address[B.-N. Guo]{School of Mathematics and Informatics,
Henan Polytechnic University, Jiaozuo City, Henan Province, 454010, China}
\email{\href{mailto: B.-N. Guo <bai.ni.guo@gmail.com>}{bai.ni.guo@gmail.com}, \href{mailto: B.-N. Guo <bai.ni.guo@hotmail.com>}{bai.ni.guo@hotmail.com}}
\urladdr{\url{http://guobaini.spaces.live.com}}

\author[F. Qi]{Feng Qi}
\address[F. Qi]{Research Institute of Mathematical Inequality Theory, Henan Polytechnic University, Jiaozuo City, Henan Province, 454010, China} \email{\href{mailto: F. Qi
<qifeng618@gmail.com>}{qifeng618@gmail.com}, \href{mailto: F. Qi
<qifeng618@hotmail.com>}{qifeng618@hotmail.com}, \href{mailto: F. Qi
<qifeng618@qq.com>}{qifeng618@qq.com}}
\urladdr{\url{http://qifeng618.spaces.live.com}}

\subjclass[2000]{26D15; 33C75; 33E05}%
\keywords{Bounds, inequality, complete elliptic integral, the first kind, the second kind, Wallis formula, integral inequality}

\begin{abstract}
In the article, the complete elliptic integrals of the first and second kinds are bounded by using the power series expansions of some functions, the celebrated Wallis' inequality, and an integral inequality due to R. P. Agarwal, P. Cerone, S. S. Dragomir and F. Qi.
\end{abstract}

\thanks{This paper was typeset using \AmS-\LaTeX}

\maketitle

\section{Introduction}

The complete elliptic integrals of the first and second kinds may be defined \cite[pp.~590--592]{abram} as
\begin{equation}\label{elliptic-1st-dfn}
E(t)=\int_0^{\pi/2}\sqrt{1-t^2\sin^2\theta}\,\td\theta
\end{equation}
and
\begin{equation}\label{elliptic-2nd-dfn}
F(t)=\int_0^{\pi/2}\frac{\td\theta}{\sqrt{1-t^2\sin^2\theta}}
\end{equation}
for $0<t<1$. They can also be defined by
\begin{equation}\label{elliptic-1st-dfn-ab}
E(a,b)=\int_0^{\pi/2}\sqrt{a^2\cos^2\theta+b^2\sin^2\theta}\,\td \theta
\end{equation}
and
\begin{equation}\label{elliptic-2nd-dfn-ab}
F(a,b)=\int_0^{\pi/2}\frac{\td\theta}{\sqrt{a^2\cos^2\theta+b^2\sin^2\theta}\,}
\end{equation}
for positive numbers $a$ and $b$.
\par
It is not difficult to see that if $a>b>0$ then
\begin{equation}
E(a,b)=aE\Biggl(\sqrt{1-\frac{b^2}{a^2}}\,\Biggr)\quad\text{and}\quad F(a,b)=\frac1aF\Biggl(\sqrt{1-\frac{b^2}{a^2}}\,\Biggr).
\end{equation}
Conversely, if $0<t<1$, then
\begin{equation}
E(t)=E\bigl(1,\sqrt{1-t^2}\,\bigr)\quad \text{and}\quad F(t)=F\bigl(1,\sqrt{1-t^2}\,\bigr).
\end{equation}
\par
For more information on the history, background, properties and applications, please refer to \cite{Almkvist-Berndt} and related references therein.
\par
The aim of this paper is to establish several double inequalities for bounding the complete elliptic integrals $E(t)$, $F(t)$ and $F(a,b)$.
\par
Our main results are stated in the following theorems.

\begin{thm}\label{1st-bounds-one}
For $0<t<1$, we have
\begin{equation}\label{1st-upper-ineq}
\frac\pi2-\frac12\ln\frac{(1+t)^{1-t}}{(1-t)^{1+t}} <E(t)<\frac{\pi-1}2+\frac{1-t^2}{4t}\ln\frac{1+t}{1-t}.
\end{equation}
\end{thm}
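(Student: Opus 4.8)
The plan is to reduce both inequalities to the single power series
\[
E(t)=\frac\pi2-\frac\pi2\sum_{n=1}^{\infty}\left[\frac{(2n-1)!!}{(2n)!!}\right]^2\frac{t^{2n}}{2n-1},
\]
obtained by expanding $\sqrt{1-t^2\sin^2\theta}$ with the binomial series and integrating term by term via the Wallis formula $\int_0^{\pi/2}\sin^{2n}\theta\,\td\theta=\frac\pi2\frac{(2n-1)!!}{(2n)!!}$. Writing $w_n=\frac{(2n-1)!!}{(2n)!!}$, the whole problem becomes one of estimating the coefficients $\frac\pi2 w_n^2$ by the two sides of Wallis' inequality $\frac1{\pi(n+1/2)}<w_n^2<\frac1{\pi n}$ and summing the resulting elementary series.

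For the upper bound I would use the left half, $\frac\pi2 w_n^2>\frac1{2n+1}$, so that termwise $\frac\pi2-E(t)>\sum_{n\ge1}\frac{t^{2n}}{(2n-1)(2n+1)}$. Partial fractions $\frac1{(2n-1)(2n+1)}=\frac12\bigl(\frac1{2n-1}-\frac1{2n+1}\bigr)$ together with $\sum_{n\ge1}\frac{t^{2n}}{2n-1}=\frac t2\ln\frac{1+t}{1-t}$ and $\sum_{n\ge1}\frac{t^{2n}}{2n+1}=\frac1{2t}\ln\frac{1+t}{1-t}-1$ collapse this to $\frac12-\frac{1-t^2}{4t}\ln\frac{1+t}{1-t}$, and subtracting from $\frac\pi2$ gives exactly the stated right-hand side. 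This half is entirely mechanical once Wallis is invoked.

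The lower bound is more delicate because the target $\frac12\ln\frac{(1+t)^{1-t}}{(1-t)^{1+t}}$ is an odd function of $t$, whereas the series for $\frac\pi2-E(t)$ has only even powers; hence no termwise comparison can land on it directly. I would instead use the right half, $\frac\pi2 w_n^2<\frac1{2n}$, to get $\frac\pi2-E(t)<\sum_{n\ge1}\frac{t^{2n}}{2n(2n-1)}$, and since $\frac1{2n(2n-1)}=\frac1{2n-1}-\frac1{2n}$ this sums to $\frac t2\ln\frac{1+t}{1-t}+\frac12\ln(1-t^2)=\frac12\ln\bigl[(1+t)^{1+t}(1-t)^{1-t}\bigr]$. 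This is in fact a sharper (even) lower bound for $E(t)$; the stated weaker one then follows from the elementary comparison $\frac12\ln[(1+t)^{1+t}(1-t)^{1-t}]\le\frac12\ln\frac{(1+t)^{1-t}}{(1-t)^{1+t}}$, equivalently $\phi(t):=-t\ln(1+t)-\ln(1-t)\ge0$ on $(0,1)$.

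The main obstacle is exactly this last step. I expect to prove $\phi\ge0$ by noting $\phi(0)=0$ and $\phi'(t)=\frac{1+t^2}{1-t^2}-\ln(1+t)>0$, since $\frac{1+t^2}{1-t^2}\ge1>\ln(1+t)$ for $0<t<1$. Conceptually, the reason this detour is needed — and presumably the reason the authors reach for the Agarwal--Cerone--Dragomir--Qi integral inequality instead — is the parity mismatch noted above: an integral estimate can produce the odd-powered closed form directly, whereas the Wallis route only yields the even-powered refinement $B(t)=\frac12\ln[(1+t)^{1+t}(1-t)^{1-t}]$ and must be supplemented by the comparison $B(t)\le\frac12\ln\frac{(1+t)^{1-t}}{(1-t)^{1+t}}$.
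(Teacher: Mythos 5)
Your proof is correct, and up to its final step it is essentially the paper's own proof: the same binomial-series expansion $E(t)=\frac\pi2\bigl[1-\sum_{i\ge1}4^{-2i}(2i-1)^{-1}\binom{2i}{i}^2t^{2i}\bigr]$ via the Wallis formula \eqref{wallis-since-formula}, the same use of the left half of \eqref{binom-ineq-accurate} and the same partial-fraction evaluation for the upper bound, and for the lower bound the same series in substance --- you weaken Wallis to $w_n^2<\frac1{\pi n}$ at the outset, while the paper uses $w_i^2<\frac1{\pi(i+1/4)}$ and then weakens $\frac1{4i+1}<\frac1{4i}$, landing on the identical series $\sum_{i\ge1}\frac{t^{2i}}{2i(2i-1)}$. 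The genuine difference is at the closing identification, and there your version is the sound one. The paper asserts the chain
\begin{equation*}
\frac14\sum_{i=1}^\infty\frac{t^{2i}}{i(2i-1)}
=\frac{t}{4}\ln\frac{1+t}{1-t}-\frac14\ln\bigl(1-t^2\bigr)
=\frac14\ln\frac{(1+t)^{1-t}}{(1-t)^{1+t}},
\end{equation*}
but both equalities fail: since $\sum_{i\ge1}t^{2i}/i=-\ln(1-t^2)$, the middle expression should carry $+\frac14\ln(1-t^2)$, and the correctly evaluated series equals $\frac14\ln\bigl[(1+t)^{1+t}(1-t)^{1-t}\bigr]$, which after the factor $2$ in $E(t)>\frac\pi2-2\sum_{i\ge1}\frac{t^{2i}}{(2i-1)(4i+1)}$ is exactly your $B(t)$. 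Your parity observation pinpoints why no such equality could ever hold: the series is even in $t$ while the claimed closed form is odd. Consequently your supplementary comparison $\frac12\ln[(1+t)^{1+t}(1-t)^{1-t}]\le\frac12\ln\frac{(1+t)^{1-t}}{(1-t)^{1+t}}$, i.e.\ $\phi(t)=-t\ln(1+t)-\ln(1-t)\ge0$ --- and your verification $\phi(0)=0$, $\phi'(t)=\frac{1+t^2}{1-t^2}-\ln(1+t)\ge1-\ln2>0$ is correct --- is not an optional detour but precisely the step missing from the paper's argument; moreover, along the way you establish the strictly sharper lower bound $E(t)>\frac\pi2-\frac12\ln\bigl[(1+t)^{1+t}(1-t)^{1-t}\bigr]$. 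One small correction to your commentary: the Agarwal--Cerone--Dragomir--Qi inequality (Lemma~\ref{gazthm2}) is not used for this theorem at all; the paper proves Theorem~\ref{1st-bounds-one} by the very Wallis route you take and reserves Lemma~\ref{gazthm2} for Theorems~\ref{iyengar-elliptic-ineq-thm} and~\ref{iyengar-elliptic-2nd-ineq-thm}.
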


\begin{thm}\label{2nd-elliptic-bounds-1}
For $b>a>0$,
\begin{equation}\label{Qi-1995-Xuebao-b>a}
\frac\pi2\cdot\frac{\ln\bigl(\sqrt{b/a}\,+\sqrt{b/a-1}\,\bigr)}{\sqrt{b(b-a)}\,} \le F(a,b) \le\frac\pi2\cdot\frac{\arctan\sqrt{b/a-1}\,}{\sqrt{a(b-a)}\,}.
\end{equation}
\end{thm}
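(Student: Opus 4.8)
The plan is to reduce \eqref{elliptic-2nd-dfn-ab} to an integral over $[0,1]$ against the weight $(1-x^2)^{-1/2}$, to bound the square root below by an elementary quadratic, and then to apply the cited integral inequality to decouple the weight from the rational factor. First I would substitute $t=\sin\theta$ and, separately, $s=\cos\theta$, which turn $F(a,b)$ into the two equivalent forms
\[ F(a,b)=\int_0^1\frac{\td t}{\sqrt{1-t^2}\,\sqrt{a^2+(b^2-a^2)t^2}\,}=\int_0^1\frac{\td s}{\sqrt{1-s^2}\,\sqrt{b^2-(b^2-a^2)s^2}\,}. \]
Since $\int_0^1(1-x^2)^{-1/2}\,\td x=\pi/2$, the weight is exactly what will supply the constant $\pi/2$ appearing in \eqref{Qi-1995-Xuebao-b>a}.

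Next I would record the perfect-square identity
\[ a^2+(b^2-a^2)t^2=\bigl[a+(b-a)t^2\bigr]^2+(b-a)^2t^2(1-t^2) \]
together with its companion $b^2-(b^2-a^2)s^2=[b-(b-a)s^2]^2+(b-a)^2s^2(1-s^2)$. Each has a nonnegative remainder, so $\sqrt{a^2+(b^2-a^2)t^2}\ge a+(b-a)t^2$ and $\sqrt{b^2-(b^2-a^2)s^2}\ge b-(b-a)s^2$. The two rational integrals these produce evaluate to $\int_0^1\frac{\td t}{a+(b-a)t^2}=\frac{\arctan\sqrt{b/a-1}\,}{\sqrt{a(b-a)}\,}$ and $\int_0^1\frac{\td s}{b-(b-a)s^2}=\frac{\ln\bigl(\sqrt{b/a}+\sqrt{b/a-1}\,\bigr)}{\sqrt{b(b-a)}\,}$, which are precisely the two sides of \eqref{Qi-1995-Xuebao-b>a} divided by $\pi/2$; so the whole task is to insert the weight in the correct direction.

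For the upper bound I would use the first representation. The radical inequality gives $F(a,b)\le\int_0^1 f(t)g(t)\,\td t$ with $f(t)=(1-t^2)^{-1/2}$ and $g(t)=[a+(b-a)t^2]^{-1}$. On $[0,1]$ the factor $f$ increases while $g$ decreases, so they are oppositely ordered, and the cited integral inequality yields $\int_0^1 fg\,\td t\le\int_0^1 f\,\td t\int_0^1 g\,\td t=\frac\pi2\int_0^1 g\,\td t$; evaluating the last integral produces the arctangent upper bound.

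The hard part will be the lower bound. The mirror argument in the second representation does not close: there $[b-(b-a)s^2]^{-1}$ is increasing, hence similarly ordered with the weight, so the cited inequality points in the direction $\int fg\ge\int f\int g$, whereas the radical inequality forces $F(a,b)\le\int_0^1(1-s^2)^{-1/2}[b-(b-a)s^2]^{-1}\,\td s$; the two estimates do not chain into the desired inequality, and one checks that the bare Chebyshev lower bound $\frac\pi2\int_0^1[b^2-(b^2-a^2)s^2]^{-1/2}\,\td s$ is strictly weaker than the stated logarithmic bound. The obstacle, then, is to orient the lower estimate so that the weight again factors out a $\pi/2$ while the surviving integral is exactly $\int_0^1[b-(b-a)s^2]^{-1}\,\td s$; this is where the precise form of the cited integral inequality (and not a bare Chebyshev inequality) must be used. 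Getting this orientation right is the only genuinely nonroutine step, the evaluation of the elementary arctangent and logarithmic integrals being mechanical.
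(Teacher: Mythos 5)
Your upper-bound argument is sound: the identity $a^2+(b^2-a^2)t^2=[a+(b-a)t^2]^2+(b-a)^2t^2(1-t^2)$ is correct, and combining the resulting pointwise bound with Chebyshev's integral inequality for the oppositely ordered pair $(1-t^2)^{-1/2}$ and $[a+(b-a)t^2]^{-1}$ does yield the arctangent bound. But note that the tool you invoke as ``the cited integral inequality'' cannot be Lemma~\ref{gazthm2}: that lemma is an Iyengar/trapezoid-type estimate requiring $m\le f'\le M$ and bounding $\bigl\vert\frac1{b-a}\int_a^bf-\frac{f(a)+f(b)}2\bigr\vert$; it says nothing about decoupling a product $\int fg$ into $\int f\int g$. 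What you are actually using is Chebyshev's inequality for monotone functions, which is fine as mathematics but is not the paper's tool for this theorem (Lemma~\ref{gazthm2} is used only for Theorems~\ref{iyengar-elliptic-ineq-thm} and~\ref{iyengar-elliptic-2nd-ineq-thm}). The genuine gap is the lower bound, which you candidly leave open: as you correctly diagnose, in the second representation the factor $[b-(b-a)s^2]^{-1}$ is similarly ordered with the weight, so Chebyshev points the wrong way, your radical inequality bounds $F(a,b)$ from \emph{above} there, and the bare Chebyshev lower bound (the arcsine expression) is strictly weaker than the logarithmic one. No orientation of a Chebyshev-type product inequality will close this; half of the theorem is unproved.

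The paper evades your obstacle by never decoupling a weight at all. Its proof rests on the pointwise double inequality \eqref{81-ineq} from \cite{ellip-xue-bao}, which sandwiches $\sqrt{1+t^2\cos^2\theta}$ between two quadratics \emph{in $\theta$ itself} that match the radical at $\theta=0$ and $\theta=\pi/2$: from above by $\sqrt{1+t^2}-\frac4{\pi^2}\bigl(\sqrt{1+t^2}-1\bigr)\theta^2$, and from below by that same parabola minus $\frac8{\pi^2}\bigl(\sqrt{1+t^2}-1\bigr)\theta\bigl(\frac\pi2-\theta\bigr)$. Taking reciprocals and integrating directly gives elementary integrals, $\int_0^{\pi/2}\frac{\td\theta}{c-d\theta^2}$ producing the logarithm and $\int_0^{\pi/2}\frac{\td\theta}{(\sqrt{t^2+1}\,-1)(2\theta/\pi-1)^2+1}$ producing the arctangent, after which the substitution $t^2=b^2/a^2-1$ yields \eqref{Qi-1995-Xuebao-b>a}. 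The decisive point is that the quadratic in $\theta$ lies \emph{above} the radical, so its reciprocal bounds the integrand from below pointwise; your quadratic $b-(b-a)s^2$ in $s=\cos\theta$ lies on the wrong side for this purpose, which is exactly why your chain breaks. To complete your route you would have to prove the nontrivial majorization $\sqrt{a^2\sin^2\theta+b^2\cos^2\theta}\le b-\frac{4(b-a)}{\pi^2}\theta^2$ on $\bigl[0,\frac\pi2\bigr]$ (established in \cite{ellip-xue-bao} via auxiliary functions with undetermined coefficients), and this is the missing idea, not a cleverer use of product inequalities.
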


\begin{thm}\label{iyengar-elliptic-ineq-thm}
For $0<t<1$, we have
\begin{multline}\label{iyengar-elliptic-ineq}
\biggl|\frac2\pi E(t)-\frac{1+\sqrt{1-t^2}\,}2\biggr|\\ \le\frac1\pi\Bigl(1-\sqrt{1-t^2}\,\Bigr)\left[1-\frac2\pi\cdot \frac{\sqrt{\bigl(1-t^2+\sqrt{1-t^2}\,\bigr)\bigl(1+\sqrt{1-t^2}\,\bigr)}\,} {\bigl(\sqrt{1-t^2}\,+1\bigr)\sqrt[4]{1-t^2}\,}\right].
\end{multline}
\end{thm}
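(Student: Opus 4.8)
The plan is to read the left-hand side as a trapezoidal quadrature error and to feed it into the Iyengar-type integral inequality of Agarwal, Cerone, Dragomir and Qi quoted in the abstract. Put $f(\theta)=\sqrt{1-t^2\sin^2\theta}$, so that $E(t)=\int_0^{\pi/2}f(\theta)\,\td\theta$, $f(0)=1$, and $f(\pi/2)=\sqrt{1-t^2}$. Then
\begin{equation*}
\frac2\pi E(t)-\frac{1+\sqrt{1-t^2}\,}2 =\frac2\pi\left[\int_0^{\pi/2}f(\theta)\,\td\theta -\frac\pi2\cdot\frac{f(0)+f(\pi/2)}2\right]
\end{equation*}
is exactly $2/\pi$ times the error made when $\int_0^{\pi/2}f$ is replaced by the trapezoidal rule on $[0,\pi/2]$, which is precisely the quantity the cited inequality controls.

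First I would assemble the data that the inequality consumes. Differentiating gives $f'(\theta)=-t^2\sin\theta\cos\theta\big/\sqrt{1-t^2\sin^2\theta}\le0$, so $f$ decreases from $1$ to $\sqrt{1-t^2}$ across $[0,\pi/2]$. To find $\sup|f'|$ I would set $u=\sin^2\theta$ and maximise $u(1-u)/(1-t^2u)$ over $[0,1]$; the stationarity condition $t^2u^2-2u+1=0$ has exactly one admissible root $u_\ast=1/\bigl(1+\sqrt{1-t^2}\,\bigr)$, the competing root $1/\bigl(1-\sqrt{1-t^2}\,\bigr)$ exceeding $1$. A short computation then produces the two facts that drive the proof,
\begin{equation*}
\max_{[0,\pi/2]}|f'|=1-\sqrt{1-t^2}\,,\qquad f(\theta_\ast)=\sqrt[4]{1-t^2}\,,\quad \sin^2\theta_\ast=u_\ast .
\end{equation*}
The second identity is the decisive one: the quantity $\sqrt[4]{1-t^2}$ on the right of \eqref{iyengar-elliptic-ineq} is simply the value of $f$ at the point where $|f'|$ is largest, equivalently the geometric mean $\sqrt{f(0)f(\pi/2)}$ of the two endpoint values.

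With $f(0)$, $f(\pi/2)$, the slope bound $1-\sqrt{1-t^2}$, and the extremal value $\sqrt[4]{1-t^2}$ in hand, I would substitute them into the Agarwal--Cerone--Dragomir--Qi inequality and reduce the outcome to \eqref{iyengar-elliptic-ineq}. The leftover work is algebraic: writing $s=\sqrt{1-t^2}$ (so that $1-t^2=s^2$ and $\sqrt[4]{1-t^2}=\sqrt s$) collapses the bulky geometric-mean factor on the right of \eqref{iyengar-elliptic-ineq} to a single clean term. I expect two genuine obstacles. The first is the constrained optimisation: one must verify carefully that $u_\ast$ is the interior maximiser and that $f$ there equals $\sqrt[4]{1-t^2}$ exactly, since this fourth root is what makes the target bound sharper than the one delivered by the classical Iyengar inequality (which uses only $\sup|f'|$). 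The second is matching hypotheses: because $f'(0)=f'(\pi/2)=0$, I must check that $f$ satisfies the regularity demanded by the cited inequality on all of $[0,\pi/2]$ before invoking it with these data. Once both points are settled, the passage to the stated closed form is routine.
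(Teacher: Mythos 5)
Your proposal is correct and takes essentially the same route as the paper: the same auxiliary function $f(\theta)=\sqrt{1-t^2\sin^2\theta}\,$, the same trapezoidal reading, and the same Agarwal--Cerone--Dragomir--Qi lemma with $M=0$ and the minimum of $f'$ at $\sin^2\theta_\ast=1/\bigl(1+\sqrt{1-t^2}\,\bigr)$, exactly as in the paper's proof (you locate the extremum via the substitution $u=\sin^2\theta$, the paper via the factorized $f''$, a cosmetic difference). Your simplification $\max|f'|=1-\sqrt{1-t^2}$ agrees with the paper's bulkier closed form for the minimum of $f'$ and, pushed through the lemma, shows that the large fraction on the right of \eqref{iyengar-elliptic-ineq} is identically $1$, so the stated bound collapses to $\frac1\pi\bigl(1-\sqrt{1-t^2}\,\bigr)\bigl(1-\frac2\pi\bigr)$ --- a tidying the paper itself does not carry out.
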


\begin{thm}\label{iyengar-elliptic-2nd-ineq-thm}
For $0<t<1$, we have
\begin{multline}\label{iyengar-elliptic-2nd-ineq}
\biggl\vert\frac2\pi F(t) -\frac{\sqrt{1-t^2}\,+1}{2\sqrt{1-t^2}\,}\biggr\vert
\le\frac1\pi\cdot\frac{1-\sqrt{1-t^2}\,}{\sqrt{1-t^2}\,}\\
\times\left[1-\frac2\pi\cdot \frac{\bigl(1-\sqrt{1-t^2}\,\bigr)\bigl(2-t^2-\sqrt{t^4-t^2+1}\,\bigr)^{3/2}} {\sqrt{\bigl(1-t^2\bigr)\bigl(\sqrt{t^4-t^2+1}\,+t^2-1\bigr)\bigl(1-\sqrt{t^4-t^2+1}\,\bigr)}\,}\right].
\end{multline}
\end{thm}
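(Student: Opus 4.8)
The plan is to read the left-hand side of \eqref{iyengar-elliptic-2nd-ineq} as the error of the trapezoidal rule for the integral defining $F(t)$ and then to apply the Iyengar-type integral inequality of Agarwal, Cerone, Dragomir and Qi. Write $g(\theta)=\bigl(1-t^2\sin^2\theta\bigr)^{-1/2}$, the integrand of $F(t)$, on $[a,b]=[0,\pi/2]$. Since $g(0)=1$ and $g(\pi/2)=(1-t^2)^{-1/2}$, the average $\frac1{b-a}\int_a^b g(\theta)\,\td\theta$ equals $\frac2\pi F(t)$ while $\frac{g(0)+g(\pi/2)}2=\frac{\sqrt{1-t^2}\,+1}{2\sqrt{1-t^2}\,}$, so the quantity inside the absolute value is exactly $\frac1{b-a}\int_a^b g-\frac{g(a)+g(b)}2$.

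Next I would verify the hypotheses. A direct computation gives $g'(\theta)=t^2\sin\theta\cos\theta\,\bigl(1-t^2\sin^2\theta\bigr)^{-3/2}\ge0$ on $[0,\pi/2]$, with $g'(0)=g'(\pi/2)=0$; hence $g$ is increasing and $\min g'=0$. Writing $M=\max_{[0,\pi/2]}|g'|$ and $\Delta=g(\pi/2)-g(0)=\frac{1-\sqrt{1-t^2}\,}{\sqrt{1-t^2}\,}$, the cited inequality (with lower bound $m=0$ for $g'$) yields
\[
\left|\frac2\pi F(t)-\frac{\sqrt{1-t^2}\,+1}{2\sqrt{1-t^2}\,}\right| \le \frac{\Delta}{2(b-a)}\left(1-\frac{\Delta}{(b-a)M}\right).
\]
With $b-a=\pi/2$ the prefactor is $\frac1\pi\cdot\frac{1-\sqrt{1-t^2}\,}{\sqrt{1-t^2}\,}$ and the bracket is $1-\frac2\pi\cdot\frac\Delta M$, so it remains to identify $\Delta/M$ with the fraction appearing in \eqref{iyengar-elliptic-2nd-ineq}.

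The substance of the proof is the evaluation of $M$. Put $u=\sin^2\theta$; then maximizing $g'$ amounts to maximizing $u(1-u)\,(1-t^2u)^{-3}$ on $[0,1]$, and setting the derivative to zero gives the quadratic $t^2u^2+2(1-t^2)u-1=0$, whose root in $[0,1]$ is $u^\ast=\frac{\sqrt{t^4-t^2+1}\,-(1-t^2)}{t^2}$. Setting $s=\sqrt{1-t^2}\,$ and $R=\sqrt{t^4-t^2+1}\,$, one finds $1-t^2u^\ast=1+s^2-R$ and $u^\ast(1-u^\ast)=(R-s^2)(1-R)/t^4$, whence
\[
M=\frac{\sqrt{(R-s^2)(1-R)}\,}{(1+s^2-R)^{3/2}}.
\]

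Finally I would substitute this value of $M$. Using $2-t^2=1+s^2$, $t^2-1=-s^2$ and $1-t^2=s^2$, a short manipulation shows that $\Delta/M$ coincides with
\[
\frac{\bigl(1-\sqrt{1-t^2}\,\bigr)\bigl(2-t^2-\sqrt{t^4-t^2+1}\,\bigr)^{3/2}}{\sqrt{\bigl(1-t^2\bigr)\bigl(\sqrt{t^4-t^2+1}\,+t^2-1\bigr)\bigl(1-\sqrt{t^4-t^2+1}\,\bigr)}\,},
\]
which is precisely the bracketed fraction in \eqref{iyengar-elliptic-2nd-ineq}; this finishes the proof. The only real obstacle is this maximization together with the attendant algebra --- in particular checking that $u^\ast\in[0,1]$ and that $R-s^2,\,1-R,\,1+s^2-R$ are all positive, so that $M$ is well defined --- since once the closed form of $M$ is available the identity $\Delta/M=X$ is immediate. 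As a consistency check I would note that $\Delta=\int_0^{\pi/2}|g'|\le(b-a)M$, so the bracket stays nonnegative and the bound is meaningful.
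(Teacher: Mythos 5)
Your proposal is correct and follows essentially the same route as the paper: both apply the Agarwal--Cerone--Dragomir--Qi inequality (Lemma~\ref{gazthm2}) to $h(\theta)=\bigl(1-t^2\sin^2\theta\bigr)^{-1/2}$ on $\bigl[0,\frac\pi2\bigr]$ with $m=0$ and $M=\max h'$, attained where $\sin^2\theta=\bigl(\sqrt{t^4-t^2+1}\,+t^2-1\bigr)/t^2$, which is exactly the paper's critical point and yields the paper's value of $M$. Your substitution $u=\sin^2\theta$ is merely a cosmetic variant of the paper's factorization of $h''$, and your verification that $\Delta/M$ equals the bracketed fraction, together with the positivity checks, matches the paper's computation.
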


\section{Lemmas}

In order to prove our main results, the following lemmas are necessary.

\begin{lem}
For $i\in\mathbb{N}$, we have
\begin{equation}\label{wallis-since-formula}
\int_0^{\pi/2}\sin^{2i}\theta\td \theta=\frac1{4^i}\binom{2i}{i}\frac\pi2.
\end{equation}
\end{lem}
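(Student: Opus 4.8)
The plan is to reduce \eqref{wallis-since-formula} to a one-step recurrence for the Wallis integrals and then match the outcome against the central binomial coefficient by an elementary factorial computation. Writing $W_n=\int_0^{\pi/2}\sin^n\theta\,\td\theta$, I would first establish the reduction relation $W_n=\frac{n-1}{n}W_{n-2}$ for $n\ge2$. Applying integration by parts with $u=\sin^{n-1}\theta$ and $\td v=\sin\theta\,\td\theta$ produces the boundary contribution $\bigl[-\sin^{n-1}\theta\cos\theta\bigr]_0^{\pi/2}$ together with $(n-1)\int_0^{\pi/2}\sin^{n-2}\theta\cos^2\theta\,\td\theta$; substituting $\cos^2\theta=1-\sin^2\theta$ in the latter gives $W_n=(n-1)(W_{n-2}-W_n)$, which rearranges at once to the claimed recurrence.

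Specializing to $n=2i$ and iterating the recurrence down to the base value $W_0=\int_0^{\pi/2}\td\theta=\frac\pi2$, I obtain
\[
W_{2i}=\frac{(2i-1)(2i-3)\cdots3\cdot1}{(2i)(2i-2)\cdots4\cdot2}\cdot\frac\pi2 =\frac{(2i-1)!!}{(2i)!!}\cdot\frac\pi2 .
\]
It then remains only to identify the double-factorial ratio with $\frac1{4^i}\binom{2i}{i}$. Using the identities $(2i)!!=2^i\,i!$ and $(2i-1)!!=\frac{(2i)!}{(2i)!!}=\frac{(2i)!}{2^i\,i!}$, the ratio collapses to $\frac{(2i)!}{4^i(i!)^2}=\frac1{4^i}\binom{2i}{i}$, which yields \eqref{wallis-since-formula} exactly.

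No step here is a genuine obstacle; the only point deserving care is the vanishing of the boundary term in the integration by parts, which holds because $\cos\frac\pi2=0$ annihilates the upper endpoint while $\sin0=0$ annihilates the lower one whenever $n\ge2$. A fully equivalent alternative would express the integral through the Beta function as $W_{2i}=\frac12 B\bigl(i+\frac12,\frac12\bigr)$ and evaluate it with the Gamma duplication formula, but the elementary recurrence above is shorter and entirely self-contained, so I would favour it.
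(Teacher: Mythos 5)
Your proposal is correct and follows essentially the same route as the paper: the paper's proof simply cites the Wallis sine formula $\int_0^{\pi/2}\sin^n x\,\td x=\frac\pi2\cdot\frac{(n-1)!!}{n!!}$ (for $n$ even) and leaves the identification $\frac{(2i-1)!!}{(2i)!!}=\frac1{4^i}\binom{2i}{i}$ implicit, whereas you additionally derive that formula from scratch via the integration-by-parts recurrence $W_n=\frac{n-1}{n}W_{n-2}$ and carry out the double-factorial conversion explicitly. Your write-up is thus a self-contained version of the paper's one-line citation proof, with all steps sound, including the correct handling of the vanishing boundary term.
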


\begin{proof}
This follows easily from Wallis sine formula \cite{WallisFormula.html}:
\begin{equation}\label{wsc}
\int_0^{\frac\pi2}\sin^nx\td x=
\begin{cases}
\dfrac{\pi}2\cdot\dfrac{(n-1)!!}{n!!}&\text{for $n$ even},\\[1em]
\dfrac{(n-1)!!}{n!!}&\text{for $n$ odd},
\end{cases}
\end{equation}
where $n!!$ denotes a double factorial.
\end{proof}

\begin{lem}[{\cite[p.~98, (2.12)]{3rded}}]
For $i\in\mathbb{N}$, we have
\begin{equation}\label{binom-ineq-accurate}
\frac{4^i}{\sqrt{\pi(i+1/2)}}<\binom{2i}i<\frac{4^i}{\sqrt{\pi(i+1/4)}}.
\end{equation}
\end{lem}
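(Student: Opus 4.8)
The plan is to reduce the two-sided bound to the monotonicity of two explicit sequences. Writing $w_i=\dfrac{4^i}{\binom{2i}{i}}$, the inequality \eqref{binom-ineq-accurate} is equivalent to
\begin{equation*}
\pi\Bigl(i+\tfrac14\Bigr)<w_i^2<\pi\Bigl(i+\tfrac12\Bigr),
\end{equation*}
so it suffices to sandwich the single quantity $w_i^2$. Since $w_i=\dfrac{(2i)!!}{(2i-1)!!}$, Wallis' formula underlying \eqref{wsc} gives $\lim_{i\to\infty}w_i^2/i=\pi$, which is the common value that both bounds will approach.

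First I would extract the recursion for $w_i$. From $\binom{2i+2}{i+1}=\dfrac{(2i+2)(2i+1)}{(i+1)^2}\binom{2i}{i}$ one gets
\begin{equation*}
\frac{w_{i+1}}{w_i}=4\,\frac{\binom{2i}{i}}{\binom{2i+2}{i+1}}=\frac{2(i+1)}{2i+1},
\qquad\text{hence}\qquad
\frac{w_{i+1}^2}{w_i^2}=\frac{4(i+1)^2}{(2i+1)^2}.
\end{equation*}

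With this ratio the two bounds split off cleanly. For the lower bound I set $\phi_i=\dfrac{w_i^2}{i+1/2}$ and compute
\begin{equation*}
\frac{\phi_{i+1}}{\phi_i}=\frac{4(i+1)^2}{(2i+1)^2}\cdot\frac{2i+1}{2i+3}=\frac{4i^2+8i+4}{4i^2+8i+3}>1,
\end{equation*}
so $\phi_i$ strictly increases to $\pi$ and therefore $\phi_i<\pi$, i.e. $w_i^2<\pi(i+1/2)$. For the upper bound I set $\psi_i=\dfrac{w_i^2}{i+1/4}$ and compute
\begin{equation*}
\frac{\psi_{i+1}}{\psi_i}=\frac{4(i+1)^2}{(2i+1)^2}\cdot\frac{4i+1}{4i+5}=\frac{16i^3+36i^2+24i+4}{16i^3+36i^2+24i+5}<1,
\end{equation*}
so $\psi_i$ strictly decreases to $\pi$ and therefore $\psi_i>\pi$, i.e. $w_i^2>\pi(i+1/4)$. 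Combining the two recovers \eqref{binom-ineq-accurate}.

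I do not expect a serious obstacle here: the argument is driven by a single telescoping ratio, and the only nonroutine point is anticipating that the shifts $1/2$ and $1/4$ are precisely the constants making the two comparison sequences monotone in opposite directions (the numerator and denominator of each ratio differing by $1$), which collapses the whole monotonicity check to a sign inspection. The single external input is the Wallis limit $w_i^2/i\to\pi$, available from \eqref{wsc}; pairing this limit with strict monotonicity is exactly what upgrades mere convergence into the strict one-sided inequalities.
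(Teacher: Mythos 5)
Your proof is correct, but the comparison point is that the paper contains no proof of this lemma at all: it is quoted verbatim from Kuang's book \cite{3rded}, and the Remark immediately following it notes that the inequality has been recovered and re-proved many times in the literature (e.g.\ in \cite{chenwallis}). Your monotone-sequence argument is essentially that standard proof, here supplied where the paper outsources it. All your computations check out: with $w_i=4^i/\binom{2i}{i}=(2i)!!/(2i-1)!!$ the claim is equivalent to $\pi(i+1/4)<w_i^2<\pi(i+1/2)$; the ratio $w_{i+1}/w_i=2(i+1)/(2i+1)$ is right; and the two quotients $\phi_{i+1}/\phi_i=(4i^2+8i+4)/(4i^2+8i+3)>1$ and $\psi_{i+1}/\psi_i=(16i^3+36i^2+24i+4)/(16i^3+36i^2+24i+5)<1$ are exact, so strict monotonicity toward the common limit $\pi$ yields both strict bounds. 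The one imported fact, $w_i^2/i\to\pi$, is itself a one-line consequence of the paper's \eqref{wallis-since-formula}--\eqref{wsc}: integrating $\sin^{2i+1}\theta\le\sin^{2i}\theta\le\sin^{2i-1}\theta$ over $\bigl[0,\frac\pi2\bigr]$ gives $\pi i<w_i^2<\pi(i+1/2)$, which both supplies your limit and, incidentally, already proves the $\phi$-half of your argument outright — so the monotonicity of $\phi_i$ is a redundancy you could trade away, while the $\psi$-half carries the real content (the constant $1/4$, which is the sharp one). Compared with the paper's citation-only treatment, your route buys self-containedness and makes visible why $1/4$ and $1/2$ are the natural thresholds — they are precisely the shifts making numerator and denominator of each comparison ratio differ by exactly $1$ — at the cost of a few lines the authors chose not to spend.
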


\begin{rem}
Bounding Wallis' formula \eqref{wsc} such as \eqref{binom-ineq-accurate} has a long history. For more information, please refer to related contents in the books \cite{3rded} and \cite[pp.~192--193, p.~287]{mit}. In \cite{Chen-CMA-1-1-06, wallis-chen-gen-math, wallis3, wallis-indon, wallis-tamk, wallis2, chenwallis, chenwallis-rgmia, wallis-gaz, construct, wallis-sun-qu} and \cite[Theorem~2]{G.-M.-Zhang}, the double inequality \eqref{binom-ineq-accurate} and its sharpness were recovered, proved and refined once and again because of either without being aware of and finding out the original version of the paper \cite{waston}, or making use of various approaches and subtle techniques, or repeating some existed routines. But, most of them were not devoted to improve the bounds in \eqref{binom-ineq-accurate}, except \cite{wallis-cao, Koumandos-PAMS-06, Zhao-De-Jun, zhao-wu-cn, zhao-wu} and \cite[Theorem~1]{G.-M.-Zhang}. Actually it was said early in \cite{Kazarinoff-56} that it is unquestionable that inequalities similar to \eqref{binom-ineq-accurate} can be improved indefinitely but at a sacrifice of simplicity.
\end{rem}

\begin{lem}
For $|s|<1$, we have
\begin{equation}\label{1+1/2-sum}
\sum_{i=0}^\infty\frac{s^{2i}}{i+1/2}=
\begin{cases}
\dfrac1s\ln\dfrac{1+s}{1-s},& s\ne0;\\
1,&s=0.
\end{cases}
\end{equation}
\end{lem}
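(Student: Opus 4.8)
The plan is to read the left-hand side as a rescaled Maclaurin series of the logarithm. First I would recall the expansion $\ln(1+x)=\sum_{k=1}^\infty(-1)^{k-1}x^k/k$, valid for $|x|<1$, and subtract from it the expansion of $\ln(1-x)$; the even powers cancel while the odd powers double, giving $\ln\frac{1+s}{1-s}=2\sum_{i=0}^\infty\frac{s^{2i+1}}{2i+1}$ for $|s|<1$. Since $\frac1{i+1/2}=\frac2{2i+1}$, dividing this identity by $s$ (legitimate for $s\neq0$) yields $\frac1s\ln\frac{1+s}{1-s}=2\sum_{i=0}^\infty\frac{s^{2i}}{2i+1}=\sum_{i=0}^\infty\frac{s^{2i}}{i+1/2}$, which is precisely the claimed closed form on $0<|s|<1$.

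A second, essentially equivalent route that I find cleaner is to start from the integral representation $\frac1{i+1/2}=2\int_0^1u^{2i}\,\td u$. Summing against $s^{2i}$ and interchanging summation and integration gives $\sum_{i=0}^\infty\frac{s^{2i}}{i+1/2}=2\int_0^1\sum_{i=0}^\infty(su)^{2i}\,\td u=2\int_0^1\frac{\td u}{1-s^2u^2}$, where the geometric series $\sum_i(su)^{2i}$ converges uniformly on $[0,1]$ because $|su|\le|s|<1$. Evaluating the elementary integral by partial fractions, $2\int_0^1\frac{\td u}{1-s^2u^2}=\frac1s\ln\frac{1+s}{1-s}$, reproduces the same answer and has the advantage of making the radius of convergence transparent.

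Finally I would dispose of the value at $s=0$ by direct substitution: every summand with $i\ge1$ vanishes, so the series reduces to its single term $1/(0+1/2)$, which is also the limit of $\frac1s\ln\frac{1+s}{1-s}$ as $s\to0$. The only point requiring genuine justification is the term-by-term operation — the interchange of sum and integral in the second route, or equivalently the appeal to the known logarithm series together with division by $s$ in the first — but uniform convergence of the geometric series on the compact interval $[0,1]$ settles this at once, so there is no substantive obstacle beyond recognizing which standard series is in play.
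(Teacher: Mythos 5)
Your first route is exactly the paper's proof, which simply deduces the identity from the power series expansions of $\ln(1\pm s)$ at $s=0$, and your version correctly fills in the details (cancellation of even powers, division by $s$, the value at $s=0$). The proposal is correct; your second route via $\frac1{i+1/2}=2\int_0^1u^{2i}\,\td u$ is a valid alternative but adds nothing the paper needs.
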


\begin{proof}
This can be deduced readily from the power series expansions of the functions $\ln(1\pm s)$ at $s=0$.
\end{proof}

\begin{lem}[\cite{rpss, cer1, cer1-rgmia, gaz}]\label{gazthm2}
Let $f(x)$ be continuous on $[a, b]$ and differentiable in $(a,b)$. Suppose
that $f(x)$ is not identically a constant, and that $m\le f'(x)\le M$ in
$(a,b)$. Then
\begin{equation}\label{gaziy11}
\biggl\vert\frac1{b-a}\int_a^bf(x)\td x-\frac{f(a)+f(b)}2\biggr\vert
\le-\frac{[M-S_0(a,b)][m-S_0(a,b)]}{{2}(M-m)},
\end{equation}
where
\begin{equation}
  S_0(a,b)=\frac{f(b)-f(a)}{b-a}.
\end{equation}
\end{lem}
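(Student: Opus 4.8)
The plan is to prove the lemma by the classical device behind Iyengar-type estimates: comparing $f$ with the extremal piecewise-linear functions allowed by the slope constraint. Concretely, I will establish the (dimensionally homogeneous, un-normalized) inequality
\[
\biggl|\int_a^b f(x)\,\td x-\frac{b-a}2\bigl[f(a)+f(b)\bigr]\biggr|\le\frac{(b-a)^2\,[M-S_0][S_0-m]}{2(M-m)},
\]
and then divide by $b-a$; this bounds the left-hand side of the lemma by $\frac{(b-a)[M-S_0][S_0-m]}{2(M-m)}=-\frac{(b-a)[M-S_0][m-S_0]}{2(M-m)}$, a nonnegative quantity because averaging $m\le f'\le M$ (the mean value theorem) forces $m\le S_0\le M$. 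First I would integrate $m\le f'(x)\le M$ over $[a,x]$ and over $[x,b]$ to obtain, for every $x\in[a,b]$, the linear bounds $f(a)+m(x-a)\le f(x)\le f(a)+M(x-a)$ and $f(b)-M(b-x)\le f(x)\le f(b)-m(b-x)$. Taking the pointwise minimum of the two upper bounds and the pointwise maximum of the two lower bounds yields the envelopes
\[
g(x):=\min\bigl\{f(a)+M(x-a),\,f(b)-m(b-x)\bigr\}\ge f(x)\ge\max\bigl\{f(a)+m(x-a),\,f(b)-M(b-x)\bigr\}=:h(x),
\]
with $g(a)=h(a)=f(a)$ and $g(b)=h(b)=f(b)$.

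Next I would observe that $g$ is a ``tent'': its two branches meet at the point $c$ determined by $f(a)+M(c-a)=f(b)-m(b-c)$, giving
\[
c-a=\frac{(b-a)[S_0-m]}{M-m},\qquad b-c=\frac{(b-a)[M-S_0]}{M-m}.
\]
Because $m\le S_0\le M$ we have $c\in[a,b]$, and $g'=M$ on $(a,c)$ while $g'=m$ on $(c,b)$. Substituting into the elementary identity
\[
\int_a^b\varphi(x)\,\td x-\frac{b-a}2\bigl[\varphi(a)+\varphi(b)\bigr]=-\int_a^b\Bigl(x-\frac{a+b}2\Bigr)\varphi'(x)\,\td x
\]
(integration by parts, valid for piecewise-$C^1$ $\varphi$) with $\varphi=g$ gives
\[
\int_a^b g(x)\,\td x-\frac{b-a}2\bigl[f(a)+f(b)\bigr]=\frac{(M-m)(c-a)(b-c)}2=\frac{(b-a)^2\,[M-S_0][S_0-m]}{2(M-m)}.
\]
Repeating the computation for the ``valley'' $h$, whose branches have slopes $m$ and then $M$, produces exactly the negative of this value.

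Finally, integrating the envelope inequality $h\le f\le g$ and subtracting $\frac{b-a}2[f(a)+f(b)]$ throughout sandwiches $\int_a^b f(x)\,\td x-\frac{b-a}2[f(a)+f(b)]$ between $\mp\frac{(b-a)^2[M-S_0][S_0-m]}{2(M-m)}$, which is the asserted un-normalized bound. I expect the main obstacle to be not any single hard estimate but the bookkeeping at the corner: one must use $m\le S_0\le M$ to be sure that the envelopes really have the claimed tent/valley shape, so that $c\in[a,b]$ and the endpoint values of $g$ and $h$ agree with those of $f$; and one must assume $M>m$ for the right-hand side to be defined---the excluded case $M=m$ forces $f$ to be affine, for which the trapezoidal rule is exact and the left-hand side is zero. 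Once the corner is located, the two integral evaluations are entirely routine.
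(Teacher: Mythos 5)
Your envelope (``tent/valley'') argument is correct and complete: the pointwise linear bounds, the location of the corner $c$ (which lies in $[a,b]$ precisely because $m\le S_0\le M$ by the mean value theorem), the evaluation of the two trapezoidal deviations, and the final sandwich all check out, and the degenerate case $M=m$ is properly set aside. One pedantic repair: obtain $f(a)+m(x-a)\le f(x)\le f(a)+M(x-a)$ from the mean value theorem on $[a,x]$ rather than by ``integrating'' $m\le f'\le M$, since a bounded derivative need not be Riemann integrable. Note also that the paper itself contains no proof of this lemma---it is quoted from \cite{rpss, cer1, cer1-rgmia, gaz}, where it is derived, for instance, from Hayashi's inequality---so there is no internal argument to compare against; your geometric comparison with the extremal piecewise-linear functions is the standard elementary route, and it recovers Iyengar's classical inequality in the symmetric case $m=-M$.

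The substantive point is that what you prove is not literally what the lemma asserts. Your argument establishes
\begin{equation*}
\biggl\vert\frac1{b-a}\int_a^bf(x)\td x-\frac{f(a)+f(b)}2\biggr\vert
\le-\frac{(b-a)\,[M-S_0(a,b)][m-S_0(a,b)]}{2(M-m)},
\end{equation*}
with the factor $b-a$ on the right, and this is the correct, dimensionally homogeneous statement from the cited sources. Inequality \eqref{gaziy11} as printed omits the factor $b-a$ and is in fact false in general: for $f(x)=x^2$ on $[0,2]$ one has $m=0$, $M=4$, $S_0=2$, the left-hand side equals $2/3$, while the printed right-hand side equals $1/2$. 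You clearly noticed the inhomogeneity (you call your intermediate inequality ``dimensionally homogeneous, un-normalized'') but then silently identified your bound with the lemma's; you should instead say explicitly that \eqref{gaziy11} carries a misprint and that the factor $b-a$ must be restored. This is not a cosmetic issue for the paper, since the lemma is later applied with $b-a=\pi/2$ in the proofs of Theorems \ref{iyengar-elliptic-ineq-thm} and \ref{iyengar-elliptic-2nd-ineq-thm}, where the missing factor changes the resulting constants.
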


For more information on refinements and generalizations of \eqref{gaziy11}, please refer to \cite{iyengar-rocky-2005, iyengar-rocky-2005-rgmia} and related references therein.

\section{Proofs of main results}

\begin{proof}[Proof of Theorem~\ref{1st-bounds-one}]
It is general knowledge that
\begin{equation}\label{1-x-expansion}
\sqrt{1-x}\,=1-\sum_{i=0}^\infty\frac{(2i)!}{2^{2i+1}(i+1)(i!)^2}x^{i+1}
\end{equation}
for $0<x<1$. Hence, by replacing $x$ by $t^2\sin^2\theta$ in \eqref{1-x-expansion}, integrating on both sides with respect to $\theta\in\bigl[0,\frac\pi2\bigr]$ and making use of \eqref{wallis-since-formula}, we have
\begin{equation}\label{E(t)-expansion}
\begin{aligned}
E(t)&=\int_0^{\pi/2}\Biggl[1-\sum_{i=0}^\infty\frac{(2i)!}{2^{2i+1}(i+1)(i!)^2} t^{2(i+1)}\sin^{2(i+1)}\theta\Biggr]\td\theta\\
&=\frac\pi2-\sum_{i=0}^\infty\frac{(2i)!}{2^{2i+1}(i+1)(i!)^2} t^{2(i+1)}\int_0^{\pi/2}\sin^{2(i+1)}\theta\td\theta\\
&=\frac\pi2-\sum_{i=0}^\infty\frac{(2i)!}{2^{2i+1}(i+1)(i!)^2} \frac1{4^{i+1}}\binom{2i+2}{i+1}\frac\pi2t^{2(i+1)}\\
&=\frac\pi2\Biggl[1-\sum_{i=0}^\infty\frac1{4^{2i+2}(2i+1)} \binom{2i+2}{i+1}^2t^{2(i+1)}\Biggr]\\
&=\frac\pi2\Biggl[1-\sum_{i=1}^\infty\frac1{4^{2i}(2i-1)} \binom{2i}{i}^2t^{2i}\Biggr].
\end{aligned}
\end{equation}
Substituting \eqref{binom-ineq-accurate} into \eqref{E(t)-expansion} gives
\begin{equation}
\frac\pi2-2\sum_{i=1}^\infty\frac{t^{2i}}{(2i-1)(4i+1)} <E(t)<\frac\pi2-\sum_{i=1}^\infty\frac{t^{2i}}{(2i-1)(2i+1)}.
\end{equation}
\par
Straightforward computation and utilization of \eqref{1+1/2-sum} gives
\begin{multline*}
\sum_{i=1}^\infty\frac{t^{2i}}{(2i-1)(2i+1)}=\frac12\sum_{i=1}^\infty \biggl(\frac1{2i-1}-\frac1{2i+1}\biggr)t^{2i}\\
=\frac12+\frac{t^2-1}4\sum_{i=0}^\infty\frac{t^{2i}}{i+1/2}
=\frac12+\frac{t^2-1}{4t}\ln\frac{1+t}{1-t}.
\end{multline*}
This means the right-hand side inequality in \eqref{1st-upper-ineq}.
\par
By the similar argument as above, it follows that
\begin{multline}
\sum_{i=1}^\infty\frac{t^{2i}}{(2i-1)(4i+1)}<\frac14\sum_{i=1}^\infty\frac{t^{2i}}{i(2i-1)}
=\frac12\sum_{i=1}^\infty\biggl(\frac1{2i-1}-\frac1{2i}\biggr)t^{2i}\\
=\frac12\sum_{i=0}^\infty\frac{t^{2i+2}}{2i+1}-\frac14\sum_{i=1}^\infty\frac{t^{2i}}i
=\frac{t}{4}\ln\frac{1+t}{1-t}-\frac14\ln\bigl(1-t^2\bigr)
=\frac14\ln\frac{(1+t)^{1-t}}{(1-t)^{1+t}}.
\end{multline}
The left-hand side inequality in \eqref{1st-upper-ineq} follows. The proof of Theorem~\ref{1st-bounds-one} is complete.
\end{proof}

\begin{proof}[Proof of Theorem~\ref{2nd-elliptic-bounds-1}]
In \cite{ellip-xue-bao}, by discussing
\begin{equation*}
\sqrt{1+t^2\cos^2\theta}\,-\sqrt{1+t^2}\,+\frac4{\pi^2}
\Bigl(\sqrt{1+t^2}\,-1\Bigr)\theta^2+\alpha\biggl(\frac\pi2-\theta\biggr)\theta
\end{equation*}
or
\begin{equation*}
\sqrt{1+t^2\cos^2\theta}\,-\sqrt{1+t^2}\,+\frac2{\pi}
\Bigl(\sqrt{1+t^2}\,-1\Bigr)\theta+\beta\biggl(\frac\pi2-\theta\biggr)\theta
\end{equation*}
on $\bigl[0,\frac\pi2\bigr]$, where $\alpha$ and $\beta$ are undetermined constants, the inequality
\begin{multline}\label{81-ineq}
-\frac8{\pi^2}\Bigl(\sqrt{1+t^2}\,-1\Bigr)\theta\biggl(\frac\pi2-\theta\biggr)\le\\*
\sqrt{1+t^2\cos^2\theta}\,-\biggl[\sqrt{1+t^2}\,-\frac4{\pi^2} \Bigl(\sqrt{1+t^2}\,-1\Bigr)\theta^2\biggr]\le0
\end{multline}
for $\theta\in\bigl[0,\frac\pi2\bigr]$ was obtained, which is equivalent to
\begin{multline}
\frac1{\sqrt{1+t^2}\,-4\bigl(\sqrt{1+t^2}\,-1\bigr)\theta^2/\pi^2} \le\frac1{\sqrt{1+t^2\cos^2\theta}\,}\\ \le\frac1{\sqrt{1+t^2}\,-4\bigl(\sqrt{1+t^2}\,-1\bigr)\theta^2/\pi^2 -8\bigl(\sqrt{1+t^2}\,-1\bigr)\theta(\pi/2-\theta)/\pi^2}.
\end{multline}
Integrating on both sides of the above double inequality with respect to $\theta\in\bigl[0,\frac\pi2\bigr]$ yields
\begin{multline}\label{Qi-1995-Xuebao}
\frac\pi2\cdot\frac{\ln\Bigl(\sqrt[4]{1+t^2}\,+\sqrt{\sqrt{1+t^2}\,-1}\,\Bigr)} {\sqrt{1+t^2-\sqrt{1+t^2}\,}\,}
\le\int_0^{\pi/2}\frac{\td\theta}{\sqrt{1+t^2\cos^2\theta}\,} \\ \le\int_0^{\pi/2}\frac{\td\theta}{\bigl(\sqrt{t^2+1}\,-1\bigr)(2\theta/\pi-1)^2+1} =\frac\pi2\cdot\frac{\arctan{\sqrt{\sqrt{1+t^2}\,-1}\,}} {\sqrt{\sqrt{t^2+1}\,-1}\,}.
\end{multline}
Replacing $t^2$ by $\frac{b^2}{a^2}-1$ for $b>a>0$ in \eqref{Qi-1995-Xuebao} and simplifying give
\begin{equation*}
\frac\pi2\frac{\ln\bigl(\sqrt{b/a}\,+\sqrt{b/a-1}\,\bigr)}{\sqrt{b(b-a)}\,} \le\int_0^{\pi/2}\frac{\td\theta}{\sqrt{a^2\sin^2\theta+b^2\cos^2\theta}\,} \le\frac\pi2\frac{\arctan\sqrt{b/a-1}\,}{\sqrt{a(b-a)}\,}.
\end{equation*}
Since
\begin{equation}
\int_0^{\pi/2}\frac{\td\theta}{\sqrt{a^2\sin^2\theta+b^2\cos^2\theta}\,} =\int_0^{\pi/2}\frac{\td\theta}{\sqrt{a^2\cos^2\theta+b^2\sin^2\theta}\,},
\end{equation}
the proof of Theorem~\ref{2nd-elliptic-bounds-1} is complete.
\end{proof}

\begin{proof}[Proof of Theorem~\ref{iyengar-elliptic-ineq-thm}]
For $0<t<1$ and $\theta\in\bigl[0,\frac\pi2\bigr]$, let
\begin{equation}
f(\theta)=\sqrt{1-t^2\sin^2\theta}\,.
\end{equation}
Direct calculation yields
\begin{align*}
f'(\theta)&=-\frac{t^2\sin\theta\cos\theta}{\sqrt{1-t^2\sin^2\theta}\,},\\
f''(\theta)&=-\frac{t^2\bigl(t^2\sin^4\theta-\sin^2\theta+\cos^2\theta\bigr)} {\bigl(1-t^2\sin^2\theta\bigr)^{3/2}}\\
&=-\frac{t^2\sin^4\theta\bigl(t^2-1+\cot^4\theta\bigr)} {\bigl(1-t^2\sin^2\theta\bigr)^{3/2}}.
\end{align*}
Hence, the function $f'(\theta)$ has a unique minimum
\begin{equation}
-\frac{t^2\sqrt[4]{1-t^2}\,} {\sqrt{\bigl(1-t^2+\sqrt{1-t^2}\,\bigr)\bigl(1+\sqrt{1-t^2}\,\bigr)}\,}
\end{equation}
at
\begin{equation}
\theta=\arctan\frac1{\sqrt[4]{1-t^2}\,}.
\end{equation}
Therefore, the maximum of $f'(\theta)$ is
\begin{equation}
\lim_{\theta\to0^+}f'(\theta)=\lim_{\theta\to(\pi/2)^-}f'(\theta)=0.
\end{equation}
Moreover, we have
\begin{equation}
f(0)=1\quad\text{and}\quad f\Bigl(\frac\pi2\Bigr)=\sqrt{1-t^2}\,.
\end{equation}
Substituting quantities above into \eqref{gaziy11} and simplifying lead to \eqref{iyengar-elliptic-ineq}. The proof of Theorem~\ref{iyengar-elliptic-ineq-thm} is complete.
\end{proof}

\begin{proof}[Proof of Theorem~\ref{iyengar-elliptic-2nd-ineq-thm}]
For $0<t<1$ and $\theta\in\bigl[0,\frac\pi2\bigr]$, let
\begin{equation}
h(\theta)=\frac1{\sqrt{1-t^2\sin^2\theta}\,}.
\end{equation}
Direct calculation yields
\begin{align*}
h'(\theta)&=\frac{t^2\sin\theta\cos\theta}{\bigl(1-t^2\sin^2\theta\bigr)^{3/2}},\\
h''(\theta)&=-\frac{t^2\bigl(\sin^2\theta-\cos^2\theta-t^2\sin^4\theta-2t^2\cos^2\theta\sin^2\theta\bigr)} {\bigl(1-t^2 \sin ^2\theta\bigr)^{5/2}}\\
&=-\frac{t^2\bigl[t^2\sin^4\theta+2\bigl(1-t^2\bigr)\sin^2\theta-1\bigr]} {\bigl(1-t^2\sin^2\theta\bigr)^{5/2}}.
\end{align*}
Hence, the function $h'(\theta)$ has a unique maximum
\begin{equation}
\frac{\sqrt{\bigl(\sqrt{t^4-t^2+1}\,+t^2-1\bigr)\bigl(1-\sqrt{t^4-t^2+1}\,\bigr)}\,} {\bigl(2-t^2-\sqrt{t^4-t^2+1}\,\bigr)^{3/2}}
\end{equation}
at
\begin{equation}
\theta=\arcsin\frac{\sqrt{\sqrt{t^4-t^2+1}\,+t^2-1}\,}{t}.
\end{equation}
Therefore, the minimum of $h'(\theta)$ is
\begin{equation}
\lim_{\theta\to0^+}h'(\theta)=\lim_{\theta\to(\pi/2)^-}h'(\theta)=0.
\end{equation}
Moreover, we have
\begin{equation}
h(0)=1\quad\text{and}\quad h\Bigl(\frac\pi2\Bigr)=\frac1{\sqrt{1-t^2}\,}.
\end{equation}
Substituting quantities above into \eqref{gaziy11} and simplifying lead to \eqref{iyengar-elliptic-2nd-ineq}. The proof of Theorem~\ref{iyengar-elliptic-2nd-ineq-thm} is complete.
\end{proof}

\section{Remarks}

\begin{rem}
In \cite{amm-556}, it was posed that
\begin{equation}\label{amm-ellip}
\frac\pi6<\int_0^1\frac1{\sqrt{4-x^2-x^3}}\,\td x<\frac{\pi\sqrt2}8.
\end{equation}
In \cite{Garstang}, the inequality \eqref{amm-ellip} was verified by using $4-x^2>4-x^2-x^3>4-2x^2$ on the unit interval $[0,1]$.
\par
In \cite{ellip-gong-ke}, by considering monotonicity and convexity of the function
\begin{gather}
\frac1{\sqrt{4-x^2-x^3}}\,-\frac12+\frac{1-\sqrt2}2\,x^4+\alpha x^3(1-x)
\end{gather}
on $[0,1]$ for undetermined constant $\alpha\ge0$, the inequality
\begin{equation}\label{37ineq}
\frac1{\sqrt{4-x^2-x^3}}\,\ge\frac12+\frac{\sqrt2\,-1}2x^4
+\biggl(\frac{11\sqrt2}8\,-2\biggr)(1-x)x^3
\end{equation}
for $x\in[0,1]$ was established, and then the lower bound in \eqref{amm-ellip}
was improved to
\begin{equation}\label{b1}
\int_0^1\frac1{\sqrt{4-x^2-x^3}}\,\td x>\frac3{10}+\frac{27\sqrt2}{160}\,.
\end{equation}
\par
It was also remarked in \cite{ellip-gong-ke} that if discussing the auxiliary
functions
\begin{gather}
\frac1{\sqrt{4-x^2-x^3}}\,-\frac12+\frac{1-\sqrt2}2\,x^2+\beta(1-x)x^2
\end{gather}
{and}
\begin{gather}
\frac1{\sqrt{4-x^2-x^3}}\,-\frac12+\frac{1-\sqrt2}2\,x^4+\theta(1-x^3)x
\end{gather}
on $[0,1]$, then inequalities
\begin{gather}
\frac1{\sqrt{4-x^2-x^3}}\,\ge\frac12+\frac{\sqrt2\,-1}2x^2
+\biggl(\frac{3\sqrt2}8\,-1\biggr)(1-x)x^2
\end{gather}
{and}
\begin{gather}
\frac1{\sqrt{4-x^2-x^3}}\,\ge\frac12+\frac{\sqrt2\,-1}2x^4
+\biggl(\frac23-\frac{11\sqrt2}{24}\,\biggr)(x^3-1)x
\end{gather}
can be obtained, and then, by integrating on both sides of above two inequalities, the lower bound in \eqref{amm-ellip} may be improved to
\begin{gather}\label{b2}
\int_0^1\frac1{\sqrt{4-x^2-x^3}}\,\td x>\frac14
+\frac{19\sqrt2}{96}
\end{gather}
{and}
\begin{gather}
\int_0^1\frac1{\sqrt{4-x^2-x^3}}\,\td
x>\frac15+\frac{19\sqrt2}{80}\,.\label{b3}
\end{gather}
Numerical computation shows that the lower bound in \eqref{b1} is better than
those in \eqref{b2} and \eqref{b3}.
\par
In \cite{Yu-Kuang-Ye}, by directly proving the inequality \eqref{37ineq} and
\begin{equation}\label{75ineq}
\frac1{\sqrt{4-x^2-x^3}} \le\frac12+\frac{\sqrt2\,-1}2x^2
+\frac{5-4\sqrt2}8\,x^2(1-x) \biggl(\frac{8\sqrt2\,-9}{8\sqrt2\,-10}+x\biggr),
\end{equation}
the inequality \eqref{b1} and an improved upper bound in \eqref{amm-ellip},
\begin{equation}\label{upper}
\int_0^1\frac1{\sqrt{4-x^2-x^3}}\,\td x<\frac{79}{192}+\frac{\sqrt2}{10},
\end{equation}
were obtained.
\par
In \cite{ellip-math-practice}, by considering an auxiliary function
\begin{equation}
\frac1{\sqrt{4-x^2-x^3}}\,-\frac12+\frac{1-\sqrt2}2\,x^2 +\alpha
x^2(1-x)\biggl(\frac{8\sqrt2\,-9}{8\sqrt2\,-10}+x\biggr)
\end{equation}
on $[0,1]$, the sharpness of the inequality \eqref{75ineq} and the following sharp
inequality
\begin{equation}
\frac1{\sqrt{4-x^2-x^3}}\ge\frac12+\frac{\sqrt2\,-1}2x^2 -\frac{1137 \bigl(4
\sqrt{2}\,-5\bigr)}{64\bigl(64-39\sqrt{2}\,\bigr)}(1-x)
\biggl(\frac{8\sqrt2\,-9}{8\sqrt2\,-10}+x\biggr)
\end{equation}
were presented, and then the inequality \eqref{upper} was obtained by integrating
on both sides of \eqref{75ineq}.
\end{rem}

\begin{rem}
Integrating on both sides of \eqref{81-ineq} with respect to $\theta$ from $0$ to $\frac\pi2$ yields
\begin{equation}\label{ellip-ellipse}
\frac{\pi}6(2a+b)<\int_0^{\pi/2}\sqrt{a^2\sin^2\theta+b^2\cos^2\theta}\,\td
\theta\le\frac{\pi}6(a+2b).
\end{equation}
When $b\ge 7a$, the right-hand side of the inequality \eqref{ellip-ellipse} is stronger than
\begin{equation}
\frac\pi4(a+b)\le\int_0^{\pi/2}\sqrt{a^2\sin^2\theta+b^2\cos^2\theta}\,\td \theta \le\frac\pi4\sqrt{2(a^2+b^2)}
\end{equation}
which can be obtained by using some properties of definite integral.
\end{rem}

\begin{rem}
The complete elliptic integral of the third kind may be defined for $0<t<1$ as
\begin{align}
I\!I(t,h)&=\int_0^{\pi/2}\frac{\td\theta}{(1+h\sin^2\theta) \sqrt{1-t^2\sin^2\theta}}. \label{4}
\end{align}
\par
By the way, some other estimates for complete elliptic integrals, obtained by using Tchebycheff's integral inequality in \cite{ellip-huang}, are mentioned below:
\begin{gather}
\frac{\pi\arcsin t}{ 2t}<F(t) <\frac{\pi}{4t}\ln\frac{1+t}{1-t}; \label{9}\\
E(t)<\frac{16-4t^2-3t^4}{ 4(4+t^2)}F(t); \label{10}\\
F(t)<\Bigl(1+\frac{h}{ 2}\Bigr)I\!I(t,h), \quad -1<h<0\quad \textup{or}\quad h>\frac{t^2}{ 2-3t^2} >0; \label{11}\\
I\!I(t,h) E(t)>\frac{\pi^2}{ 4\sqrt{1+h}}, \quad -2<2h<t^2; \label{12}\\
E(t)\ge \frac{16-28t^2+9t^4}{ 4(4-5t^2)}F(t), \quad t^2\le \frac{2}{3}. \label{13}
\end{gather}
For $0<2h<t^2$, the inequality \eqref{11} is reversed. For $h>\frac{t^2}
{2-3t^2}>0$, the inequality \eqref{12} is reversed.
\par
As concrete examples, the following estimates of the complete elliptic integrals are also deduced in \cite{ellip-huang}:
\begin{gather}
\frac{\pi^2}{ 4\sqrt 2}<\int_0^{\pi/2}\biggl(1-\frac{\sin^2x}{ 2}
\biggr)^{-1/2}\td x<\frac{\pi\ln(1+\sqrt 2\,)}{\sqrt 2},
\label{14}\\
\int_0^{\pi/2}\biggl(1+\frac{\cos x}{ 2}\biggr)^{-1}\td x
<\frac{\pi(\ln 3-\ln2)}{ 2},
\label{15}\\
\int_0^{\pi/2}\biggl(1-\frac{\sin x}{ 2}\biggr)^{-1}\td x
=\int_{\pi/2}^\pi\biggl(1+\frac{\cos x}{ 2}\biggr)^{-1}\td x
>\frac{\pi\ln 2}{ 2}.
\label{16}
\end{gather}
These results are better than those in \cite[p.~607]{kuang-2nd}.
\end{rem}

\begin{rem}
It is noted that some new results on complete elliptic integrals are obtained
in \cite{turan-baricz} recently. It was pointed in \cite{turan-baricz} that the right-hand side inequality in \eqref{9} is a recovery of \cite[Theorem~3.10]{avv}. In \cite{turan-baricz}, the inequality \eqref{9} was also generalized to the case of generalized complete elliptic integrals by the same method as in \cite{construct, ellip-huang}.
\par
In \cite{turan-baricz-proc}, some of the results in \cite{turan-baricz} were further improved.
\end{rem}


\begin{thebibliography}{99}

\bibitem{abram}
M. Abramowitz and I. A. Stegun (Eds), \textit{Handbook of Mathematical Functions with Formulas, Graphs, and Mathematical Tables}, Reprint of the 1972 edition. A Wiley-Interscience Publication. Selected Government Publications. John Wiley \& Sons, Inc., New York; National Bureau of Standards, Washington, DC, 1984.

\bibitem{rpss}
R. P. Agarwal and S. S. Dragomir, \textit{An application of Hayashi's inequality for differentiable functions}, {Computers Math. Appl.} \textbf{32} (1996), no.~6, 95\nobreakdash--99.

\bibitem{Almkvist-Berndt}
G. Almkvist and B. Berndt, \textit{Gauss, Landen, Ramanujan, the arithmetic-geometric mean, ellipses, $\pi$, and the Ladies Diary}, Amer. Math. Monthly \textbf{95} (1988), 585--607.

\bibitem{avv}
G. D. Anderson, M. K. Vamanamurthy and M. Vuorinen, \textit{Functional
inequalities for hypergeometric functions and complete elliptic integrals},
SIAM J. Math. Anal. \textbf{23} (1992), 512\nobreakdash--524.

\bibitem{turan-baricz}
\'A. Baricz, \textit{Tur\'an type inequalities for generalized complete
elliptic integrals}, Math. Z. \textbf{256} (2007), no.~4, 895\nobreakdash--911.

\bibitem{turan-baricz-proc}
\'A. Baricz, \textit{Tur\'an type inequalities for hypergeometric functions}, Proc. Amer. Math. Soc. \textbf{136} (2008), no.~9, 3223\nobreakdash--3229.

\bibitem{wallis-cao}
J. Cao, D.-W. Niu, and F. Qi, \textit{A Wallis type inequality and a double inequality for probability integral}, Austral. J. Math. Anal. Appl. \textbf{5} (2007), no.~1, Art.~3; Available online at \url{http://ajmaa.org/cgi-bin/paper.pl?string=v4n1/V4I1P3.tex}.

\bibitem{cer1}
P. Cerone and S. S. Dragomir, \textit{Lobatto type quadrature rules for functions with bounded derivative}, Math. Inequal. Appl. \textbf{3} (2000), no.~2, 197\nobreakdash--209.

\bibitem{cer1-rgmia}
P. Cerone and S. S. Dragomir, \textit{Lobatto type quadrature rules for functions with bounded derivative}, RGMIA Res. Rep. Coll. \textbf{2} (1999), no.~2, Art.~1, 133\nobreakdash--146; Available online at \url{http://www.staff.vu.edu.au/rgmia/v2n2.asp}.

\bibitem{Chen-CMA-1-1-06}
Ch.-P. Chen, \textit{On Wallis' inequality},  Commun. Math. Anal. \textbf{1} (2006), no.~1, 1\nobreakdash--5.

\bibitem{wallis-chen-gen-math}
Ch.-P. Chen, \textit{Proof of the best bounds in Wallis' inequality}, Gen. Math. \textbf{13} (2005), no.~2, 117\nobreakdash--120.

\bibitem{wallis3}
Ch.-P. Chen and F. Qi, \textit{A new proof of the best bounds in Wallis' inequality}, RGMIA Res. Rep. Coll. \textbf{6} (2003), no.~2, Art.~2, 19\nobreakdash--22; Available online at  \url{http://www.staff.vu.edu.au/rgmia/v6n2.asp}.

\bibitem{wallis-indon}
Ch.-P. Chen and F. Qi, \textit{Best upper and lower bounds in Wallis' inequality}, J. Indones. Math. Soc. (MIHMI) \textbf{11} (2005), no.~2, 137\nobreakdash--141.

\bibitem{wallis-tamk}
Ch.-P. Chen and F. Qi, \textit{Completely monotonic function associated with the gamma function and proof of Wallis' inequality}, Tamkang J. Math. \textbf{36} (2005), no.~4, 303\nobreakdash--307.

\bibitem{wallis2}
Ch.-P. Chen and F. Qi, \textit{Improvement of lower bound in Wallis' inequality}, RGMIA Res. Rep. Coll. \textbf{5} (2002), Suppl., Art.~23; Available online at \url{http://www.staff.vu.edu.au/rgmia/v5(E).asp}.

\bibitem{chenwallis}
Ch.-P. Chen and F. Qi, \textit{The best bounds in Wallis' inequality}, Proc. Amer. Math. Soc. \textbf{133} (2005), no.~2, 397\nobreakdash--401.

\bibitem{chenwallis-rgmia}
Ch.-P. Chen and F. Qi, \textit{The best bounds in Wallis' inequality}, RGMIA Res. Rep. Coll. \textbf{5} (2002), no.~4, Art.~13, 708\nobreakdash--712; Available online at \url{http://www.staff.vu.edu.au/rgmia/v5n4.asp}.

\bibitem{wallis-gaz}
Ch.-P. Chen and F. Qi, \textit{The best bounds to $\frac{(2n)!}{2^{2n}(n!)^2}$}, Math. Gaz. \textbf{88} (2004), 54\nobreakdash--55.

\bibitem{Garstang}
R. H. Garstang, \textit{Bounds on an elliptic integral}, Amer. Math. Monthly \textbf{94} (1987), no.~6, 556\nobreakdash--557.

\bibitem{ellip-xue-bao}
B.-N. Guo, F. Qi and Sh.-J. Jing, \textit{Improvement for the upper bound of a class of elliptic integral}, Ji\=aozu\`o Ku\`angy\`e Xu\'eyu\`an Xu\'eb\`ao (Journal of Jiaozuo Mining Institute) \textbf{14} (1995), no.~6, 125\nobreakdash--128. (Chinese)

\bibitem{Kazarinoff-56}
D. K. Kazarinoff, \textit{On Wallis' formula}, Proc. Edinburgh Math. Soc. \textbf{9} (1956), Edinburgh Math. Notes No.~40 (1956), 19\nobreakdash--21.

\bibitem{Koumandos-PAMS-06}
S. Koumandos, \textit{Remarks on a paper by Chao-Ping Chen and Feng Qi}, Proc. Amer. Math. Soc. \textbf{134} (2006), 1365\nobreakdash--1367.

\bibitem{kuang-2nd}
J.-Ch. Kuang, \textit{Ch\'angy\`ong B\`ud\v{e}ngsh\`i} (\textit{Applied Inequalities}), 2nd ed., H\'un\'an Ji\`aoy\`u Ch\=ub\v{a}n Sh\`e (Hunan Education Press), Changsha, China, May 1993. (Chinese)

\bibitem{3rded}
J.-Ch. Kuang, \textit{Ch\'angy\`ong B\`ud\v{e}ngsh\`\i} (\textit{Applied Inequalities}), 3rd ed., Shandong Science and Technology Press, Ji'nan City, Shandong Province, China, 2004. (Chinese)

\bibitem{mit}
D. S. Mitrinovi\'c, \textit{Analytic Inequalities}, Springer-Verlag, Berlin, 1970.

\bibitem{gaz}
F. Qi, \textit{Inequalities for an integral}, Math. Gaz. \textbf{80} (1996), no.~488, 376\nobreakdash--377.

\bibitem{iyengar-rocky-2005-rgmia}
F. Qi, P. Cerone, and S. S. Dragomir, \textit{Some new Iyengar type inequalities}, RGMIA Res. Rep. Coll. \textbf{5} (2002), no.~2, Art.~3, 237\nobreakdash--252; Available online at \url{http://www.staff.vu.edu.au/rgmia/v5n2.asp}.

\bibitem{iyengar-rocky-2005}
F. Qi, P. Cerone, and S. S. Dragomir, \textit{Some new Iyengar type inequalities}, Rocky Mountain J. Math. \textbf{35} (2005), no.~3, 997\nobreakdash--1015.

\bibitem{construct}
F. Qi, L.-H. Cui, and S.-L. Xu, \textit{Some inequalities constructed by Tchebysheff's integral inequality}, Math. Inequal. Appl. \textbf{2} (1999), no.~4, 517\nobreakdash--528.

\bibitem{ellip-math-practice}
F. Qi and B.-N. Guo, \textit{Estimate for upper bound of an elliptic integral}, Sh\`uxu\'e de Sh\'iji\`an y\v{u} R\`ensh\'i (Math. Practice Theory) \textbf{26} (1996), no.~3, 285\nobreakdash--288. (Chinese)

\bibitem{ellip-gong-ke}
F. Qi and B.-N. Guo, \textit{The estimation of inferior bound for an ellipse integral}, G\=ongk\=e Sh\`uxu\'e (Journal of Mathematics for Technology) \textbf{10} (1994), no.~1, 87\nobreakdash--90. (Chinese)

\bibitem{ellip-huang}
F. Qi and Zh. Huang, \textit{Inequalities of the complete elliptic integrals}, Tamkang J. Math. \textbf{29} (1998), no.~3, 165\nobreakdash--169.

\bibitem{amm-556}
Th. M. Rassias, \textit{Problem E 3111}, Amer. Math. Monthly \textbf{92} (1985), no.~9, 665.

\bibitem{wallis-sun-qu}
J.-Sh. Sun and Ch.-M. Qu, \textit{Alternative proof of the best bounds of Wallis' inequality}, Commun. Math. Anal. \textbf{2} (2007), no.~1, 23\nobreakdash--27.

\bibitem{waston}
G. N. Watson, \textit{A note on gamma functions}, Proc. Edinburgh Math. Soc. \textbf{11} (1958/1959), no.~2, Edinburgh Math Notes No.~42 (misprinted 41) (1959), 7\nobreakdash--9.

\bibitem{WallisFormula.html}
E. W. Weisstein, \textit{Wallis Cosine Formula}, From MathWorld---A Wolfram Web Resource; Available online at \url{http://mathworld.wolfram.com/WallisFormula.html}.

\bibitem{Yu-Kuang-Ye}
L.-Q. Yu, F. Qi and B.-N. Guo, \textit{Estimates for upper and lower bounds of a complete elliptic integral}, Ku\`ang Y\`e (Mining) (1995), no.~1, 35\nobreakdash--38. (Chinese)

\bibitem{G.-M.-Zhang}
G.-M. Zhang, \textit{The upper bounds and lower bounds on Wallis' inequality}, Math. Practice Theory \textbf{37} (2007), no.~5, 111--116. (Chinese)

\bibitem{Zhao-De-Jun}
D.-J. Zhao, \textit{On a two-sided inequality involving Wallis' formula}, Sh\`uxu\'e de Sh\'iji\`an y\v{u} R\`ensh\'i (Math. Practice Theory) \textbf{34} (2004), 166\nobreakdash--168. (Chinese)

\bibitem{zhao-wu-cn}
Y.-Q. Zhao and Q.-B. Wu, \textit{An improvement of the Wallis inequality}, Zh\=eji\=ang D\`axu\'e Xu\'eb\`ao (L\v{i}xu\'e B\v{a}n) (Journal of Zhejiang University (Science Edition)) \textbf{33} (2006), no.~2, 372\nobreakdash--375. (Chinese)

\bibitem{zhao-wu}
Y.-Q. Zhao and Q.-B. Wu, \textit{Wallis inequality with a parameter}, J. Inequal. Pure Appl. Math. \textbf{7} (2006), no.~2, Art.~56; Available online at \url{http://jipam.vu.edu.au/article.php?sid=673}.

\end{thebibliography}
\end{document}